\documentclass[a4paper, 11pt]{amsart}
\makeatletter
\@namedef{subjclassname@2020}{%
	\textup{2020} Mathematics Subject Classification}
\makeatother

\usepackage{amsmath,amsthm,verbatim,amssymb,amsfonts,mathrsfs,amscd, graphicx,enumerate,tikz-cd,stmaryrd}
\usepackage{enumitem}
\setlist[enumerate,1]{label={$(\roman*)$},leftmargin=*}
\usepackage{multicol, caption}
\usepackage{fullpage}

\usepackage{mathtools}
\usepackage{xifthen}
\usepackage{xstring}
\usepackage{mathabx}
\usepackage{array}  
\usepackage{parskip} 
\usepackage{booktabs}
\usepackage[all]{xy}
\usepackage{hyperref} 
\hypersetup{
    colorlinks=true,
    linkcolor=blue,
    filecolor=magenta,      
    urlcolor=cyan,
    citecolor=blue,
}

\usepackage{booktabs}
\usepackage[OT2,T1]{fontenc}

\usepackage{cleveref}
\usepackage{umoline}

\usepackage{shortcuts}
\usepackage{stackengine}


\usepackage[msc-links,alphabetic]{amsrefs}

\newtheorem{thm}{Theorem}[section]
\newtheorem{lem}[thm]{Lemma}
\newtheorem{prop}[thm]{Proposition}
\theoremstyle{definition}
\newtheorem{defn}[thm]{Definition}

\newtheorem{exmp}[thm]{Example}
\newtheorem{cor}[thm]{Corollary}
\newtheorem{remark}[thm]{Remark}

\newtheorem*{question*}{Question}

\numberwithin{thm}{section}
\numberwithin{equation}{section}

\newcommand{\dotifempty} [1]{\ifthenelse{\isempty{#1}}
                          	{\cdot}%
                          	{#1}}

%
%


\DeclarePairedDelimiter\braces{\{}{\}}

\newcommand{\sog}[1]{\left( #1 \right)}


\graphicspath{ {images/} }

\title{On the relation between pseudocharacters and Chenevier's determinants}

\author{Amit Ophir}

\begin{document}

\maketitle

\begin{abstract}
Consider a commutative unital ring $A$ and a unital $A$-algebra $R$. 
Let $d$ be a positive integer.
Chenevier proved that when $(2d)!$ is invertible in $A$, the map associating to a determinant its trace is a bijection between $A$-valued $d$-dimensional determinants of $R$ and $A$-valued $d$-dimensional pseudocharacters of $R$.
In this paper, we show that assuming $d!$ is invertible in $A$ is sufficient. This assumption is already made in the definition of a $d$-dimensional pseudocharacter.

Our proof involves establishing a product formula for pseudocharacters, which might be of independent interest.
\end{abstract}

\section{Introduction}

Pseudocharacters were introduced by Wiles (\cite{Wiles_OnOrdinary}) in dimension $2$ and by Taylor (\cite{Taylor}) more generally (under the name pseudo-representations) in order to construct $l$-adic representations of Galois groups.
They can be thought of as an abstractization of the trace of a finite dimensional representation.
However, the theory of pseudocharacters works best only under some restrictions on the characterstic of the ground ring.
This issue was solved by the introduction of \textit{determinants} by Chenevier in \cite{chenevier2014p}.
Determinant work over an arbitrary ring, with no assumption on its characteristic.
They are closely related to pseudocharacters, but are able to capture the notion of the characteristic polynomial of a finite dimensional representation.
Since their introduction, determinants have become a ubiquitous tool in number theory (\cite{scholze2015torsion} being one example).

In this short paper, we address the question: Under what conditions on the ground ring are pseudocharacters equivalent to determinants?

Let $A$ be a commutative unital ring, and let $R$ be a (not necessarily commutative) unital $A$-algebra.
To any $d$-dimensional $A$-valued determinant $D$ of $R$, there is an associated trace $\mathrm{Tr}_D:R\map A$, as defined in \cite{chenevier2014p} by 
\[D(t-x)=t^n-\mathrm{Tr}_D(x)t^{n-1}+\dots.\]
Chenevier showed (loc.\ Cit.\  Lemma 1.12, (iii)) that if $d!$ is invertible in $A$, then $\mathrm{Tr}_D$ is a $d$-dimensional pseudocharacter.
The mapping $D\mapsto \mathrm{Tr}_D$, from the set of $d$-dimensional $A$-valued determinants into the set of $d$-dimensional $A$-valued pseudocharacters, is shown to be injective (loc.\ Cit.\ Proposition 1.27).
Furthermore, Chenevier established (loc.\ Cit.\ Proposition 1.29) that this mapping becomes a bijection if $(2d)!$ is invertible in $A$.
However, he suggested (loc.\ Cit.\ Remark 1.28) that this might hold under the weaker condition that $d!$ is invertible in $A$.
Our main result verifies this:

\begin{thm}\label{thm_main}
Assume that $d!$ is invertible in $A$.
The mapping $D\mapsto Tr_D$ is a bijection between $A$-valued $d$-dimensional determinants on $R$ and $A$-valued $d$-dimensional pseudocharacters on $R$.
\end{thm}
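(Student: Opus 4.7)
Since Chenevier has already shown that the map $D\mapsto \mathrm{Tr}_D$ is well defined under the hypothesis ``$d!$ invertible in $A$" and that it is injective, the task reduces to proving surjectivity. Given a $d$-dimensional pseudocharacter $T\colon R\to A$, I need to construct an $A$-valued $d$-dimensional determinant $D$ of $R$ whose trace is $T$, and this amounts to producing a natural, multiplicative, homogeneous polynomial law of degree $d$.

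The candidate is the ``Amitsur-style" formula. For every commutative $A$-algebra $B$, let $T_B\colon R\otimes_A B\to B$ be the base-changed pseudocharacter (which exists and is again $d$-dimensional), and for $\sigma\in S_n$ with cycle decomposition $\sigma=c_1\cdots c_r$, write
\[
T_B^\sigma(x_1,\ldots,x_n) \;=\; \prod_{j=1}^r T_B\!\bigl(x_{i_{j,1}}x_{i_{j,2}}\cdots x_{i_{j,k_j}}\bigr),
\]
where $c_j=(i_{j,1}\ i_{j,2}\ \cdots\ i_{j,k_j})$. Define
\[
D_B(x) \;:=\; \frac{1}{d!}\sum_{\sigma\in S_d}\varepsilon(\sigma)\,T_B^\sigma(x,\ldots,x),\qquad x\in R\otimes_A B,
\]
which uses the hypothesis that $d!$ is invertible. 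This is manifestly homogeneous of degree $d$ and natural in $B$ (because base change is natural on pseudocharacters), so it defines a homogeneous polynomial law $D$ of degree $d$. The identification $\mathrm{Tr}_D=T$ should then follow from computing the coefficient of $t^{d-1}$ in $D_B(t\cdot 1 - x)$; here the only surviving cycle-type is the one whose cycles are all fixed points except for one singleton containing $x$, giving back $T(x)$ after the standard count.

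The real content is the multiplicativity $D_B(xy)=D_B(x)\,D_B(y)$. This is the step Chenevier could deduce under the stronger $(2d)!$-hypothesis by a doubling/symmetrization argument that is unavailable here. My approach is to prove a \emph{product formula}, i.e.\ an identity valid for every $d$-dimensional pseudocharacter $T$,
\[
\frac{1}{d!}\sum_{\sigma\in S_d}\varepsilon(\sigma)\,T^\sigma(x_1 y_1,\ldots,x_d y_d) \;=\; \frac{1}{(d!)^2}\sum_{\tau,\rho\in S_d}\varepsilon(\tau)\varepsilon(\rho)\,T^\tau(x_{\rho(1)},\ldots,x_{\rho(d)})\,T^\rho(y_1,\ldots,y_d),
\]
or some equivalent formulation suited to the multilinear polarization of $D$. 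Setting $x_i=x$ and $y_i=y$ in the polarized version then yields multiplicativity of $D$. The identity should be proved by a direct combinatorial expansion of each $T(x_{i_1}y_{i_1}\cdots x_{i_k}y_{i_k})$ into pieces indexed by factorizations, followed by systematic cancellation using the defining $(d+1)$-linear vanishing relation of $T$, which plays the role played by $\det$ vanishing on $(d+1)$-fold antisymmetrizations in the representation-theoretic picture.

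\textbf{Main obstacle.} The main obstacle is precisely this product formula. The combinatorial expansion of $T^\sigma(x_1y_1,\ldots,x_dy_d)$ produces many terms indexed by auxiliary data (orderings within each cycle, groupings of the $y_i$-factors relative to the $x_i$-factors) that do not \emph{a priori} match the two-permutation sum on the right. Showing that the mismatched terms cancel requires organizing them into $(d+1)$-element configurations on which the pseudocharacter relation forces vanishing, and the bookkeeping here, done without the slack provided by an extra factor of $(d+1)(d+2)\cdots(2d)$ being invertible, is the technical heart of the paper. Once the product formula is in hand, multiplicativity of $D_B$ follows for every $B$, verifying that $D$ is a determinant and completing the proof.
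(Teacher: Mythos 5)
Your skeleton coincides with the paper's: reduce to surjectivity via Chenevier's injectivity, take as candidate the law $D_T(x)=\frac{1}{d!}\pse{T^B}{d}(x,\dots,x)$ (your signed cycle-product sum is exactly $\pse{T^B}{d}$, by the equivalence of Taylor's definition with the recursive one), verify $\mathrm{Tr}_{D_T}=T$ from the coefficient of $t^{d-1}$, and reduce multiplicativity to a product formula for pseudocharacters. The gap is that the product formula --- which you yourself call the technical heart --- is neither correctly stated nor proved. The identity you display is false. Since $\pse{T}{d}$ is symmetric (the well-known consequence of centrality used throughout the paper), summing over $\tau$ first collapses your right-hand side to $\frac{1}{(d!)^2}\pse{T}{d}(x_1,\dots,x_d)\,\pse{T}{d}(y_1,\dots,y_d)$, so your identity asserts
\[
d!\;\pse{T}{d}(x_1y_1,\dots,x_dy_d)\;=\;\pse{T}{d}(x_1,\dots,x_d)\,\pse{T}{d}(y_1,\dots,y_d),
\]
with only the diagonal products $x_iy_i$ on the left. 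Already for $d=2$, $T=\mathrm{Tr}$ on $M_2(\mathbb{Q})$, $x_1=y_1=E_{11}$ and $x_2=y_2=E_{22}$, the left side equals $2$ while the right side equals $1$. The correct statement, \Cref{prop_n_n_identity}, must sum over all matchings of the $x_i$ with the $y_j$:
\[
\pse{T}{d}(x_1,\dots,x_d)\,\pse{T}{d}(y_1,\dots,y_d)\;=\;\sum_{\sigma\in S_d}\pse{T}{d}(x_1y_{\sigma(1)},\dots,x_dy_{\sigma(d)}).
\]

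Even granting your hedge about ``some equivalent formulation,'' the proposed proof of the formula is only a gesture, and it points at the wrong mechanism. You plan to expand the cycle products and cancel mismatched terms by organizing them into $(d+1)$-element configurations killed by the pseudocharacter relation. In the paper the cancellation does not come from the $(d+1)$-relation at all: the general product formula (\Cref{thm_main_identity}), which says that $\pse{f}{}$ carries the partial-bijection product of multisets in $\M(R)$ to multiplication in $A$, holds for an \emph{arbitrary} central function $f$, and is proved by induction on the size of one multiset directly from the recursion \Cref{eq_recursive}, with associativity of the multiset product (\Cref{prop_associativity}) as the only combinatorial input. The hypothesis that $T$ is a $d$-dimensional pseudocharacter enters only afterwards, in \Cref{prop_n_n_identity}, to discard the terms indexed by non-bijective partial matchings, since those produce $\pse{f}{k}$ with $k>d$. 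What is missing from your proposal is exactly this content: a correct formulation of the product identity and a device (the multiset calculus of Section 2, or an equivalent) that actually proves it. As written, the proposal is a plan, not a proof.
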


A function $f:R\map A$ is called \textit{central} if $f(xy)=f(yx)$, for all $x,y\in R$.
An $A$-valued $d$-dimensional \textit{pseudocharacter} of $R$ is an $A$-linear central function $f:R\map A$ satisfying the following conditions.
    \begin{enumerate}
    \item $f(1)=d$,
    \item $d!$ is invertible in $A$, and
    \item $\pse{f}{d+1}(f)(x_1,\dots,x_{d+1})=0$ for all $x_1,\dots,x_{d+1}\in R$,
    \end{enumerate}
where the maps $\pse{f}{n}$
\footnote{Let $x_1,x_2,x_3\in R$. The first three $\pse{f}{n}$ are
        \begin{align*}
        \pse{f}{1}(x_1)&=f(x_1),\\
        \pse{f}{2}(x_1,x_2)&=f(x_1)f(x_2)-f(x_1x_2), \text{ and}\\
        \pse{f}{3}(x_1,x_2,x_3)&=f(x_1)f(x_2)f(x_3)-f(x_1x_2)f(x_3)-f(x_1x_3)f(x_2)-f(x_2x_3)f(x_1)+f(x_1x_2x_3)+f(x_1x_3x_2).
        \end{align*}}
        , for $n=1,2,\dots$, are defined recursively as follows.
Set $\pse{f}{1}(x)=f(x)$, and for any $n>1$,
    \begin{equation}\label{eq_recursive}
    \begin{aligned}
    \pseudo_nf(x_1,\dots,x_n)=&f(x_n)\pseudo_{n-1}f(x_1,\dots,x_{n-1})\\
    &-\sum_{i=1}^{n-1}\pseudo_{n-1}f(x_1,\dots,x_{i-1},x_i\cdot x_n,x_{i+1},\dots,x_{n-1}).
    \end{aligned}
    \end{equation} 

    \begin{remark}
    The definition of the maps $\pse{f}{n}$ given here is different than the definition given in \cite{Taylor}.
    See \cite{rouquier1996}*{Lemma 2.2.} for the equivalence of the two definitions.
    \end{remark} 

Given an $A$-linear representation $\rho:R\map \End_A(A^d)$, and if $d!$ is invertible in $A$, the map $\mathrm{Tr}\circ \rho:R\map A$ is an $A$-valued pseudocharacter of dimension $d$, as follows from an old result of Frobenius (see also \cite{Bellaiche_Chenevier_book}*{1.2.2. Main example}). 
Many properties of pseudocharacters are now known due to the work of Taylor (\cite{Taylor}), Rouquier (\cite{rouquier1996}), Nyssen (\cite{nyssen1996pseudo}), Proccesi (\cite{PROCESI1976}), and others

A $d$-dimensional $A$-valued determinant is an $A$-valued, multiplicative polynomial law, which is homogeneous of degree $d$.
We refer to Section $1$ of \cite{chenevier2014p} for the definitions of these notions. 

We now explain the main challenge in proving \Cref{thm_main}.
Starting with a $d$-dimensional pseudocharacter $f:R\map A$, there is a natural candidate for an associated determinant, namely
    \begin{equation}\label{eq_det}
    D_f(x)=\frac{1}{d!}\pse{f}{d}(x,\dots,x).
    \end{equation} 
It is easy to see that $D_f$ is an $A$-valued, degree $d$ homogeneous polynomial law.
The problem is to show that $D_f$ is multiplicative.
We prove the multiplicativity of $D_f$ in \Cref{cor_multiplicative}.
It directly follows from the following product formula, which is proven in Section $3$.
\begin{prop}\label{prop_n_n_identity}
Let $f:R\map A$ be a pseudocharacter of degree $d$.
Then
\[\pse{f}{d}(x_1,\dots,x_d)\pse{f}{d}(y_1,\dots,y_d)=\sum_{\sigma\in S_d}\pse{f}{d}(x_1y_{\sigma(1)},\dots,x_dy_{\sigma(d)}).\]
\end{prop}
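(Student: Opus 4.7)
The plan is to prove a stronger statement by induction on $k$: for every $1 \le k \le d$,
\begin{equation*}
\pse{f}{k}(y_1,\dots,y_k)\,\pse{f}{d}(x_1,\dots,x_d) \;=\; \sum_{\iota\colon[k]\hookrightarrow[d]} \pse{f}{d}(z_1^\iota,\dots,z_d^\iota),
\end{equation*}
where the sum ranges over injections $\iota$ and the tuple $(z_j^\iota)_{j=1}^d$ is obtained from $(x_1,\dots,x_d)$ by replacing $x_{\iota(s)}$ with $x_{\iota(s)}\,y_s$ for each $s\in\{1,\dots,k\}$ (and leaving the other entries alone). Specializing to $k=d$ and reparametrizing via $\sigma=\iota^{-1}$ recovers \Cref{prop_n_n_identity}.

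The base case $k=1$ reads
\[f(y_1)\,\pse{f}{d}(x_1,\dots,x_d) \;=\; \sum_{i=1}^{d}\pse{f}{d}(x_1,\dots,x_{i-1},x_iy_1,x_{i+1},\dots,x_d),\]
which is exactly the pseudocharacter axiom $\pse{f}{d+1}(x_1,\dots,x_d,y_1)=0$ rewritten via \eqref{eq_recursive}; this is the only place where the axiom enters.

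For the induction step $k-1\to k$, I would apply \eqref{eq_recursive} to expand $\pse{f}{k}(y_1,\dots,y_k)$, multiply the result by $\pse{f}{d}(x_1,\dots,x_d)$, and invoke the inductive hypothesis to rewrite each product $\pse{f}{k-1}(\cdots)\,\pse{f}{d}(x)$ that appears on the right-hand side. The leading summand becomes $f(y_k)\sum_{\iota\colon[k-1]\hookrightarrow[d]}\pse{f}{d}(z^\iota)$. Applying the base case once more to each $f(y_k)\,\pse{f}{d}(z^\iota)$, the resulting terms split according to whether the insertion position $j$ of $y_k$ lies outside or inside $\mathrm{Im}(\iota)$: if $j\notin\mathrm{Im}(\iota)$, one extends $\iota$ to an injection $\iota'\colon[k]\hookrightarrow[d]$ with $\iota'(k)=j$, contributing exactly to the right-hand side I want; if $j=\iota(r)$ for some $r<k$, then $y_k$ merges with the already-placed $y_r$ into a single factor $x_{\iota(r)}y_ry_k$ at position $\iota(r)$. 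Summed over all $\iota$ and $r$, the ``inside'' contributions match, term-for-term, the inductive hypothesis applied to $\pse{f}{k-1}(y_1,\dots,y_{r-1},y_ry_k,y_{r+1},\dots,y_{k-1})\,\pse{f}{d}(x)$, and hence cancel exactly the subtraction in \eqref{eq_recursive}, leaving the desired identity for $k$.

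The main obstacle is the combinatorial bookkeeping needed to verify that the matching between the ``inside'' terms and the subtracted $\pse{f}{k-1}(\dots y_r y_k \dots)\,\pse{f}{d}(x)$ terms is exact (no overcounting, no missing terms). Symmetry of $\pse{f}{d}$ in its arguments, which follows from centrality of $f$, is used implicitly to identify tuples that differ only by a reordering; no further input beyond the pseudocharacter axioms and the recursion \eqref{eq_recursive} is needed.
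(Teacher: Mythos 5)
Your proof is correct, but it takes a genuinely different route from the paper. The paper first builds a combinatorial framework: the ring $\M(R)$ of formal $\Z$-linear combinations of multisets, with a product $\times$ summing over \emph{all} partial bijections, whose associativity (\Cref{prop_associativity}) drives an induction proving the product formula $\pse{f}{}(\multi{x}\times\multi{y})=\pse{f}{n}(\multi{x})\pse{f}{m}(\multi{y})$ for an \emph{arbitrary} central function $f$ (\Cref{thm_main_identity}); the pseudocharacter axiom enters only at the very end, to kill the terms coming from non-bijective partial bijections, whose multisets have cardinality $>d$. You instead bake the axiom in from the start: your base case $k=1$ \emph{is} the axiom $\pse{f}{d+1}=0$, so the truncated sums over injections $[k]\hookrightarrow[d]$ are exactly what survives, and the terms that the paper discards at the end never appear. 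The cancellation bookkeeping in your induction step (matching the ``inside'' terms, where $y_k$ merges with an already-placed $y_r$, against the subtracted terms of \Cref{eq_recursive}) plays precisely the role that associativity of $\times$ plays in the paper, and it does close up correctly: for fixed $r$, summing the inside terms over injections $\iota$ of $[k-1]$ reproduces the inductive hypothesis for the tuple with $y_r$ replaced by $y_ry_k$, with no overcounting, since each position $j$ lies in $\mathrm{Im}(\iota)$ for at most one $r$. (One small point: your appeal to symmetry of $\pse{f}{d}$ is not actually needed, since every $x_j$ stays in position $j$ throughout; also note your intermediate statement must be quantified over arbitrary tuples $(y_1,\dots,y_{k-1})$, as you implicitly do when applying it to the merged tuple.) What each approach buys: yours is leaner and self-contained, and would suffice to prove \Cref{thm_main} without any of Section 2; the paper's approach isolates a product formula valid for all central functions — equivalently, that $\pse{f}{}:\M(R)\map A$ is a ring homomorphism — which is more general than what is needed here and is put forward as being of independent interest.
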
 
    
\Cref{prop_n_n_identity} is a special case of a more general product formula that we prove in \Cref{thm_main_identity}.


\section{A product formula}

\subsection{Combinatorial preparation}

Let $H$ be a semigroup, i.e.\ a set $H$ together with an associative binary operation.
We denote by $xy$ the product of $x,y\in H$.
Given $x_1,\dots,x_n\in H$, we denote by $\multi{x}=\multiset{x_1,\dots,x_n}\in H^n/S_n$ the multiset of these elements, here $S_n$ denotes the permutation group on $\braces{1,\dots,n}$.
The cardinally of the multiset $\multi{x}=\multiset{x_1,\dots,x_n}$ is denoted by $\car{\multi{x}}=n$.

By an \textit{$H$-multiset} we will always mean a finite multiset of the form $\multiset{x_1,\dots,x_n}$, for some $n$ and $x_1,\dots,x_n\in H$.

Let $\M(H)$ denote the free $\Z$-module on the set of all $H$-multisets, including the empty $H$-multiset, which we denote by $\emptyset_H$.
Thus, an element in $\M(H)$ is a formal sum
\[\sum_{i=1}^ma_i\multi{x}_i,\]
where $a_1,\dots,a_m\in\Z$ and $\multi{x}_1,\dots,\multi{x}_m$ are $H$-multisets.
We identify an $H$-multiset $\multi{x}$ with its image in $\M(H)$.
The $H$-multisets form a $\Z$-basis of $\M(H)$.
Next, we define a multiplication on $\M(H)$.

Let $[n]=\braces{1,\dots,n}$.
A \textit{partial bijection} from $[n]$ to $[m]$ is a triple $(I,J,\alpha)$, where $I\subset [n]$, $J\subset [m]$, and $\alpha:I\map J$ is a bijection.
We will use the notation $(I,J,\alpha):[n]\map [m]$.

Let $\multi{x}\in H^n/S_n$ and $\multi{y}\in H^m/S_m$ be $H$-multisets.
Choose an ordering of the elements of $\multi{x}$ and an ordering of the elements of $\multi{y}$:
\[\multi{x}=\multiset{x_1,\dots,x_n},\ \multi{y}=\multiset{y_1,\dots,y_m}.\]
Let $(I,J,\alpha):[n]\map [m]$ be a partial bijection.
We define an $H$-multiset $\multi{x}\atimes{\alpha}\multi{y}$ as follows.
\[\multi{x}\atimes{\alpha}\multi{y}=\multiset{x_s\ |\ s\in [n]\backslash I}\ \sqcup\ \multiset{y_t\ |\ t\in [m]\backslash J}\ \sqcup\ \multiset{x_iy_{\alpha(i)}\ |\ i\in I},\]
where $\sqcup$ is the disjoint union of multisets.
The cardinality of $\multi{x}\atimes{\alpha}\multi{y}$ is 
\[\car{\multi{x}\atimes{\alpha}\multi{y}}=\car{\multi{x}}+\car{\multi{y}}-\car{I}.\]
Note that we always have the empty partial bijection: $(\varnothing,\varnothing,\varnothing):[n]\map :[m]$, and
\[\multi{x}\atimes{\varnothing}\multi{y}=\multiset{x_1,\dots,x_n,y_1,\dots,y_m}.\]

\begin{defn}
Given $H$-multisets $\multi{x}=\multiset{x_1,\dots,x_n}$ and $\multi{y}=\multiset{y_1,\dots,y_m}$, we define their multiplication $\multi{x}\times \multi{y}$ in $\M(H)$ by
\[\multi{x}\times\multi{y}=\sum_{(I,J,\alpha):[n]\map [m]}\multi{x}\atimes{\alpha}\multi{y},\]
where the sum runs over all partial bijections $(I,J,\alpha):[n]\map [m]$.
\end{defn} 

Let $\multi{x}\in H^n/S_n$, $\multi{y}\in H^m/S_m$ and let $(I,J,\alpha):[n]\map [m]$ be a partial bijection.
Note that the $H$-multiset $\multi{x}\atimes{\alpha}\multi{y}$ is defined only when we have chosen specific orderings of the elements of $\multi{x}$ and of $\multi{y}$.
The $H$-multiset $\multi{x}\atimes{\alpha}\multi{y}$ depends on those choices of orderings.
However, $\multi{x}\times \multi{y}$ is independent of the orderings of $\multi{x}$ and  $\multi{y}$.

    \begin{exmp}\label{exmp_empty_is_unit}
    Let $\multi{x}\in H^n/S_n$
    Recall that $\emptyset_H$ is the empty $H$-multiset.
    Then $\multi{x}\times \emptyset_H=\emptyset_H\times \multi{x}=\multi{x}$.
    \end{exmp} 

\begin{exmp}
    Let $\multi{x}=\multiset{x_1,x_2}$, $\multi{y}=\multiset{y_1}$, and $\multi{z}=\multiset{z_1,z_2}$.
        \begin{align*}
        \multi{x}\times\multi{y}&=\multiset{x_1,x_2,y_1}+\multiset{x_1y_1,x_2}+\multiset{x_1,x_2y_1}, \text{ and}\\
        \multi{x}\times\multi{z}&=\multiset{x_1,x_2,z_1,z_2}+\multiset{x_1z_1,x_2,z_2}+\multiset{x_1,x_2z_1,z_2}\\
        &\hspace{1cm}+\multiset{x_1z_2,x_2,z_1}+ \multiset{x_1,x_2z_2,z_1}+ \multiset{x_1z_1,x_2z_2}+\multiset{x_1z_2,x_2z_1}.
        \end{align*} 
\end{exmp} 

\begin{exmp}\label{exm_free}
    Let $H$ be the free semigroup on the symbols $x_1,\dots,x_n,y_1,\dots,y_m$.
    Let $\multi{x}=\multiset{x_1,\dots,x_n}$ and $\multi{y}=\multiset{y_1,\dots,y_m}$.
    Then $\multi{x}\times \multi{y}$ is the sum of all of the $H$-multisets $\multiset{u_1,\dots,u_k}$ of the following form. 
    Each $u_i$ is equal to either $x_t$, or $y_s$, or $x_ty_s$ for some $t\in [n]$ and some $s\in [m]$. 
    Each $x_t$ appears (either alone or multiplied by some $y_s$) in $u_i$ for a unique $i\in [k]$.
    Similarly, each $y_s$ appears in $u_j$ for a unique $j\in [k]$.
    Moreover, any $H$-multiset that appears in the sum $\multi{x}\times \multi{y}$, appears with coefficient $1$.
    We note that $\max(n,m)\leq k\leq n+m$.
\end{exmp} 


We extend the multiplication $\times$ to $\M(H)$ $\Z$-linearly.
Thus, if $S_1=\sum_{i=1}^{m}a_i\multi{x}_i$ and $S_2=\sum_{i=1}^{n}b_i\multi{y}_i$, then
\[S_1\times S_2=\sum_{i=1}^m\sum_{j=1}^na_ib_j\multi{x}_i\times\multi{y}_j.\]
Since the set of $H$-multisets is a free basis of $\M(H)$ as a $\Z$-module, the extension of $\times$ to $\M(H)$ is well defined, and it exhibits both left and right distributivity.

Let $\psi:H_1\map H_2$ be a homomorphism of semigroups.
Applying $\psi$ on each entry of each $H_1$-multiset, we obtain a $\Z$-linear map $\M(\psi):\M(H_1)\map\M(H_2)$.
The next proposition is immediate from the definitions.

\begin{prop}\label{prop_functoriality}
For any $S,T\in \M(H_1)$, 
\[\M(\psi)(S\times T)=\M(\psi)(S)\times \M(\psi)(T).\]
\end{prop}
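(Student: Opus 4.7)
The plan is to reduce to basis elements and then unwind the definitions.

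Since $\M(\psi)$ is $\Z$-linear by construction and the multiplication $\times$ on $\M(H_1)$ and on $\M(H_2)$ is defined to be $\Z$-bilinear, it suffices to verify the identity when $S = \multi{x}$ and $T = \multi{y}$ are $H_1$-multisets, i.e.\ basis elements. So I fix orderings $\multi{x} = \multiset{x_1, \dots, x_n}$ and $\multi{y} = \multiset{y_1, \dots, y_m}$.

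The core observation is that the indexing set for the defining sum of $\multi{x} \times \multi{y}$, namely the set of partial bijections $(I, J, \alpha): [n] \to [m]$, is purely combinatorial and does not depend on $H_1$ or $H_2$. So the same partial bijections index both $\multi{x} \times \multi{y}$ and $\M(\psi)(\multi{x}) \times \M(\psi)(\multi{y})$. The key computation is then the term-by-term identity
\[\M(\psi)\bigl(\multi{x} \atimes{\alpha} \multi{y}\bigr) = \M(\psi)(\multi{x}) \atimes{\alpha} \M(\psi)(\multi{y})\]
for every fixed partial bijection $(I, J, \alpha): [n] \to [m]$. Inspecting the definition of $\atimes{\alpha}$, the left-hand side is the multiset
\[\multiset{\psi(x_s) \mid s \in [n] \setminus I} \ \sqcup\ \multiset{\psi(y_t) \mid t \in [m] \setminus J} \ \sqcup\ \multiset{\psi(x_i y_{\alpha(i)}) \mid i \in I},\]
and the right-hand side differs only in that the last block reads $\multiset{\psi(x_i)\psi(y_{\alpha(i)}) \mid i \in I}$. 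These agree precisely because $\psi$ is a homomorphism of semigroups, giving $\psi(x_i y_{\alpha(i)}) = \psi(x_i)\psi(y_{\alpha(i)})$ for each $i \in I$.

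Summing this termwise identity over all partial bijections $(I, J, \alpha): [n] \to [m]$ and using $\Z$-linearity of $\M(\psi)$ to pull the sum inside yields
\[\M(\psi)(\multi{x} \times \multi{y}) = \sum_{(I,J,\alpha)} \M(\psi)(\multi{x}) \atimes{\alpha} \M(\psi)(\multi{y}) = \M(\psi)(\multi{x}) \times \M(\psi)(\multi{y}),\]
which establishes the claim on basis elements and hence (by bilinearity) on all of $\M(H_1)$. There is no real obstacle: the only substantive input is the homomorphism property of $\psi$, applied in the "merged" block of $\atimes{\alpha}$. This is why the statement is genuinely immediate from the definitions.
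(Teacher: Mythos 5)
Your proof is correct and coincides with what the paper intends: the paper offers no written proof at all, simply declaring the proposition ``immediate from the definitions,'' and your argument --- reducing by $\Z$-bilinearity to basis elements and then applying the homomorphism property of $\psi$ termwise inside each $\multi{x}\atimes{\alpha}\multi{y}$ --- is exactly the routine verification being elided.
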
 

The next result is the main combinatorial input.

\begin{prop}\label{prop_associativity}
Let $\multi{x},\multi{y}$, and $\multi{z}$ be $H$-multisets.
Then we have an equality of multisets
    \begin{equation}\label{eq_associativity}
    (\multi{x}\times \multi{y})\times \multi{z}=\multi{x}\times (\multi{y}\times \multi{z}).
    \end{equation} 
\end{prop}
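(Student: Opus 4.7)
The plan is to reduce to the case where $H$ is the free semigroup on symbols corresponding to the entries of $\multi{x}, \multi{y}, \multi{z}$, and then verify the identity by a direct combinatorial description of both sides.

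First, I would fix orderings $\multi{x}=\multiset{x_1,\dots,x_n}$, $\multi{y}=\multiset{y_1,\dots,y_m}$, $\multi{z}=\multiset{z_1,\dots,z_k}$ and let $F$ be the free semigroup on fresh symbols $\xi_1,\dots,\xi_n,\eta_1,\dots,\eta_m,\zeta_1,\dots,\zeta_k$. Let $\psi:F\to H$ be the homomorphism sending $\xi_i\mapsto x_i$, $\eta_j\mapsto y_j$, $\zeta_\ell\mapsto z_\ell$, and write $\multi{\xi},\multi{\eta},\multi{\zeta}$ for the corresponding $F$-multisets. By Proposition~\ref{prop_functoriality}, applying $\M(\psi)$ to the desired identity for $\multi{\xi},\multi{\eta},\multi{\zeta}$ yields the identity for $\multi{x},\multi{y},\multi{z}$, so it suffices to prove \eqref{eq_associativity} in $\M(F)$.

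Next, I would introduce the set $W$ of $F$-multisets $\multi{w}=\multiset{w_1,\dots,w_r}$ in which each entry $w_s$ is a word of one of the seven canonical forms $\xi_i$, $\eta_j$, $\zeta_\ell$, $\xi_i\eta_j$, $\xi_i\zeta_\ell$, $\eta_j\zeta_\ell$, or $\xi_i\eta_j\zeta_\ell$ (letters written in this left-to-right order in $F$), subject to the condition that each generator $\xi_i$, $\eta_j$, $\zeta_\ell$ appears in exactly one $w_s$. The claim is that both $(\multi{\xi}\times\multi{\eta})\times\multi{\zeta}$ and $\multi{\xi}\times(\multi{\eta}\times\multi{\zeta})$ equal $\sum_{\multi{w}\in W}\multi{w}$. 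For the left side, a summand is indexed by a pair $(\alpha,\beta)$ in which $\alpha$ pairs some $\xi_i$'s with some $\eta_j$'s, and $\beta$ pairs some entries of $\multi{\xi}\atimes{\alpha}\multi{\eta}$ (each of which is a lone $\xi$, a lone $\eta$, or a $\xi$-$\eta$ pair) with some $\zeta_\ell$'s; every such $(\alpha,\beta)$ manifestly yields an element of $W$. Conversely, given $\multi{w}\in W$, the partial bijection $\alpha$ is read off from those $w_s$ containing both a $\xi$ and an $\eta$, and $\beta$ from those $w_s$ containing a $\zeta_\ell$ together with a $\xi$ and/or an $\eta$. The symmetric construction, pairing $\eta$'s with $\zeta$'s first and then $\xi$'s with the resulting groups, treats the right-hand side.

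The main technical point in carrying this out is to verify that in each of the two expansions, the assignment from pairs of partial bijections to $F$-multisets is injective, so that each $\multi{w}\in W$ is produced with coefficient exactly one. This should follow because $F$ is free on the chosen generators: every entry $w_s$ uniquely determines which of its letters came from $\multi{\xi}$, $\multi{\eta}$, and $\multi{\zeta}$, so the entire pairing data $(\alpha,\beta)$ or $(\alpha',\beta')$ is recoverable from $\multi{w}$ alone. Once this bookkeeping is done, both sides coincide with $\sum_{\multi{w}\in W}\multi{w}$, and \eqref{eq_associativity} follows.
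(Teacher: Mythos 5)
Your proposal is correct and follows essentially the same route as the paper's proof: reduce to the free semigroup on the entries via Proposition~\ref{prop_functoriality}, then identify both sides with the sum over all multisets whose entries have the seven canonical forms, each such multiset occurring with coefficient $1$. Your extra bookkeeping (recovering the pairing data $(\alpha,\beta)$ from each $\multi{w}$ to establish multiplicity one) is exactly the detail the paper leaves implicit, so there is no substantive difference.
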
 

    \begin{proof}
    By \Cref{prop_functoriality}, it is enough to prove the claim for the free semigroup $H$ on the variables $x_1,\dots,x_n,y_1,\dots,y_m,z_1,\dots,z_k$.
    Let $\multi{x}=\multiset{x_1,\dots,x_n}$, $\multi{y}=\multiset{y_1,\dots,y_m}$, and $\multi{z}=\multiset{z_1,\dots,z_k}$ in $H$.
    Both sides of \Cref{eq_associativity} are equal to the sum of all of the $H$-multisets of the form $\multi{w}=\multiset{u_1,\dots,u_l}$, where each $w_i$ is either $x_s$, or $y_t$, or $z_w$, or $x_sy_t$, or $x_sz_w$, or $y_tz_w$, or $x_sy_tz_w$, for some $s\in [n],\ t\in [m],\ w\in [k]$.
    And for each $s\in [n],\ t\in [m],\ w\in [k]$, the variables $x_s$, $y_t$, $z_u$ appear exactly once in $\multi{w}$.
    Morevover, any such $\multi{w}$ appears with coefficient $1$ in the sum.
    \end{proof} 

\begin{cor}
Let $H$ be a semigroup.
Then $\M(H)$ together with the multiplication $\times$ is a unital ring.
\end{cor}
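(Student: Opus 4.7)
The plan is to assemble the ring axioms from pieces already established in the excerpt, since $\M(H)$ has its abelian-group structure for free (it is the free $\Z$-module on $H$-multisets) and the multiplication $\times$ has been defined on the basis of $H$-multisets and then extended $\Z$-bilinearly. So the only things to verify are associativity, the existence of a two-sided unit, and the two distributivity laws, together with the fact that multiplication by a scalar commutes with $\times$.

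First, I would observe that $\Z$-bilinear extension of any map $B\times B\to \M(H)$ (where $B$ denotes the basis of $H$-multisets) automatically satisfies left and right distributivity as well as $\Z$-bilinearity. This observation is already recorded immediately after the definition of the bilinear extension, so I would just cite it rather than re-prove it. That handles the distributivity axioms and the interaction of $\times$ with the $\Z$-module structure.

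Next, for the existence of a two-sided unit I would take $\emptyset_H$, the class of the empty $H$-multiset. Example \ref{exmp_empty_is_unit} gives $\multi{x}\times \emptyset_H = \emptyset_H\times \multi{x} = \multi{x}$ for every basis element $\multi{x}$, and then bilinearity extends this to the assertion that $\emptyset_H$ is a two-sided identity for every element of $\M(H)$.

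The one nontrivial step is associativity, and this is the main content. On basis elements it is exactly the statement of Proposition \ref{prop_associativity}. To pass from basis elements to arbitrary elements of $\M(H)$, given $S_1=\sum_i a_i\multi{x}_i$, $S_2=\sum_j b_j\multi{y}_j$, $S_3=\sum_k c_k\multi{z}_k$, I would expand both $(S_1\times S_2)\times S_3$ and $S_1\times(S_2\times S_3)$ using the distributivity just recorded, reducing them to the same $\Z$-linear combination $\sum_{i,j,k} a_ib_jc_k\,(\multi{x}_i\times \multi{y}_j)\times \multi{z}_k = \sum_{i,j,k} a_ib_jc_k\,\multi{x}_i\times(\multi{y}_j\times \multi{z}_k)$, where the equality of summands is Proposition \ref{prop_associativity}. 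Assembling associativity, distributivity, and the unit $\emptyset_H$ gives the ring structure. There is essentially no obstacle beyond citing Proposition \ref{prop_associativity}; the remainder of the proof is formal bookkeeping that I would dispatch in a few lines.
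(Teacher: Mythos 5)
Your proposal is correct and follows essentially the same route as the paper: the unit is $\emptyset_H$ via Example~\ref{exmp_empty_is_unit}, distributivity is the already-recorded property of the $\Z$-bilinear extension, and associativity on all of $\M(H)$ is obtained by combining Proposition~\ref{prop_associativity} on basis elements with distributivity. The only difference is that you spell out the trilinear expansion explicitly, which the paper leaves implicit.
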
 
    \begin{proof}
    The empty $H$-multiset $\emptyset_H$ is the multiplicative unit of $\M(H)$, as follows from \Cref{exmp_empty_is_unit}.
    We already know that $\times$ is left and right distributive.
    The associativity of $\times$ follows from distributivity together with \Cref{prop_associativity}.
    \end{proof}

\subsection{A product formula}

During this section, we take $H=(R,\cdot)$ to be the multiplicative semigroup of $R$, and we let $\M(R)$ stand for $\M((R,\cdot))$.
We fix a central functions $f:R\map A$.

Given $\multi{x}=\multiset{x_1,\dots,x_n}\in R^n/S_n$, we set
\[\pse{f}{n}(\multi{x})=\pse{f}{n}(x_1,\dots,x_n).\]
It is well known (see \cite{rouquier1996}) that if $f$ is central, then $\pse{f}{n}$ is a symmetric multilinear form.
Therefore, $\pse{f}{n}(\multi{x})$ is well defined.
By convention, $\pse{f}{}(\emptyset_H)=1$, where $\emptyset_H$ is the empty $H$-multiset.
We extend $\pse{f}{}$ to $\M(H)$, $\Z$-linearly.
If $S=\sum_{i=1}^ma_i\multi{x}_i\in  \M(R)$, then
\[\pse{f}{}(S)=\sum_{i=1}^ma_i\pse{f}{\car{\multi{x}_i}}(\multi{x}_i).\]

\begin{prop}[Reformulation of \Cref{eq_recursive}]\label{prop_product_formula_first_case}
Let $S\in \M(R)$ and $z\in R$.
Then
\[\pse{f}{}(S\times \multiset{z})=\pse{f}{}(S)\cdot f(z).\]
\end{prop}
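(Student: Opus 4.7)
The plan is to reduce immediately to the case where $S$ is a single $H$-multiset, compute $\multi{x}\times\multiset{z}$ explicitly from the definition, and then recognize the resulting expression as a direct rewriting of the recursive formula \Cref{eq_recursive}.

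By the $\Z$-linear extensions of both $\pse{f}{}$ and $\times$, it suffices to verify the identity when $S=\multi{x}=\multiset{x_1,\dots,x_n}$ is a single $R$-multiset. To compute $\multi{x}\times\multiset{z}$, I enumerate the partial bijections $[n]\map [1]$: there is the empty partial bijection, contributing $\multiset{x_1,\dots,x_n,z}$, and for each $i\in[n]$ the partial bijection with $I=\{i\}$, $J=\{1\}$, contributing $\multiset{x_1,\dots,x_{i-1},x_iz,x_{i+1},\dots,x_n}$. Hence
\[
\multi{x}\times\multiset{z}=\multiset{x_1,\dots,x_n,z}+\sum_{i=1}^{n}\multiset{x_1,\dots,x_{i-1},x_iz,x_{i+1},\dots,x_n}.
\]

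Applying $\pse{f}{}$ term by term, using that $\pse{f}{n}(\multi{x})$ is well defined (as $f$ is central and $\pse{f}{n}$ is symmetric multilinear) and that cardinalities determine which $\pse{f}{k}$ to apply, I obtain
\[
\pse{f}{}(\multi{x}\times\multiset{z})=\pse{f}{n+1}(x_1,\dots,x_n,z)+\sum_{i=1}^{n}\pse{f}{n}(x_1,\dots,x_{i-1},x_iz,x_{i+1},\dots,x_n).
\]
On the other hand, setting $x_{n+1}=z$ in \Cref{eq_recursive} gives
\[
\pse{f}{n+1}(x_1,\dots,x_n,z)=f(z)\pse{f}{n}(x_1,\dots,x_n)-\sum_{i=1}^{n}\pse{f}{n}(x_1,\dots,x_{i-1},x_iz,x_{i+1},\dots,x_n).
\]
Adding these two identities, the sums over $i$ cancel and I am left with $\pse{f}{}(\multi{x}\times\multiset{z})=f(z)\pse{f}{n}(\multi{x})=\pse{f}{}(S)\cdot f(z)$.

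There is essentially no obstacle here: the statement is an explicit reformulation of the recursion, and the only thing to check is the bookkeeping of partial bijections $[n]\map [1]$, which matches exactly the terms appearing on the right-hand side of \Cref{eq_recursive}. The one thing worth being careful about is invoking symmetry of $\pse{f}{n}$ (so that the ordering of the elements of $\multi{x}$ chosen when computing $\multi{x}\atimes{\alpha}\multiset{z}$ does not affect the value of $\pse{f}{}$), which is where centrality of $f$ is used.
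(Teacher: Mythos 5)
Your proof is correct and follows essentially the same route as the paper: reduce to a single multiset $S=\multi{x}$ by linearity and distributivity, expand $\multi{x}\times\multiset{z}$ over the partial bijections $[n]\map[1]$, and observe that the resulting identity is exactly \Cref{eq_recursive} with $x_{n+1}=z$. Your write-up just makes explicit the enumeration and cancellation that the paper leaves as ``a direct consequence'' of the recursion.
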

    \begin{proof}
    By the linearity of $\pse{f}{}$ and the distributivity of the multiplication $\times$, it is enough to prove the proposition in the special case $S=\multi{x}=\multiset{x_1,\dots,x_n}$.
    In this case,
    \[\pse{f}{}(\multi{x}\times \multiset{z})=\pse{f}{n+1}(x_1,\dots,x_n,z_1)+\sum_{i=1}^n\pse{f}{n}(x_1,\dots,x_{i-1},x_iz_1,x_{i+1},\dots,x_n),\]
    and
    \[\pse{f}{n}(\multi{x})\cdot f(z)=f(z)\cdot\pse{f}{n}(x_1,\dots,x_n).\]
    The equality $\pse{f}{}(\multi{x}\times \multiset{z})=\pse{f}{n}(\multi{x})\cdot f(z)$ is now a direct consequence of \Cref{eq_recursive}.    
    \end{proof}

\begin{thm}\label{thm_main_identity}
Let $\multi{x}\in R^n/S_n$ and $\multi{y}\in R^m/S_m$.
Then
    \begin{equation}\label{eq_1}
    \pse{f}{}(\multi{x}\times \multi{y})
    =\pse{f}{n}(\multi{x})\cdot\pse{f}{m}(\multi{y})
    \end{equation} 
\end{thm}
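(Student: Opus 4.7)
My plan is to induct on $m = \car{\multi{y}}$, using the associativity of the ring $\M(R)$ (\Cref{prop_associativity}) to reduce the general product formula to the singleton case \Cref{prop_product_formula_first_case}, which is itself a repackaging of the defining recursion \Cref{eq_recursive}.

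For the base case $m=0$, we have $\multi{x}\times \emptyset_H = \multi{x}$ by \Cref{exmp_empty_is_unit}, and both sides reduce to $\pse{f}{n}(\multi{x})$ using the convention $\pse{f}{}(\emptyset_H)=1$. (The case $m=1$ is simply \Cref{prop_product_formula_first_case}.) For the inductive step, decompose $\multi{y} = \multi{y}'\sqcup \multiset{z}$ with $\car{\multi{y}'}=m-1$, and apply \Cref{prop_associativity}:
\[\pse{f}{}\bigl((\multi{x}\times \multi{y}')\times \multiset{z}\bigr) = \pse{f}{}\bigl(\multi{x}\times (\multi{y}'\times \multiset{z})\bigr).\]
The left-hand side, by \Cref{prop_product_formula_first_case} followed by the inductive hypothesis applied to $\multi{x}\times \multi{y}'$, equals $\pse{f}{n}(\multi{x})\cdot \pse{f}{m-1}(\multi{y}')\cdot f(z)$. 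On the right-hand side, the singleton product expands as
\[\multi{y}'\times \multiset{z} = \multi{y} + \sum_{i=1}^{m-1} \multi{y}'_{(i)},\]
where $\multi{y}'_{(i)}$ denotes $\multi{y}'$ with the entry $y'_i$ replaced by $y'_i z$; each of these multisets has cardinality $m-1$, so by the inductive hypothesis (applied $m$ times, distributing $\multi{x}\times$ across the sum) the right-hand side equals
\[\pse{f}{}(\multi{x}\times \multi{y}) + \pse{f}{n}(\multi{x})\cdot \sum_{i=1}^{m-1}\pse{f}{m-1}(\multi{y}'_{(i)}).\]

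Equating the two sides and solving for $\pse{f}{}(\multi{x}\times \multi{y})$ isolates the factor $\pse{f}{n}(\multi{x})$ times the quantity
\[f(z)\cdot \pse{f}{m-1}(\multi{y}') - \sum_{i=1}^{m-1}\pse{f}{m-1}(\multi{y}'_{(i)}),\]
which is exactly $\pse{f}{m}(\multi{y}'\sqcup \multiset{z}) = \pse{f}{m}(\multi{y})$ by the recursion \Cref{eq_recursive} (after ordering $\multi{y}$ so that $z$ is the last entry, which is legitimate by the symmetry of $\pse{f}{m}$ for central $f$). I do not anticipate a serious obstacle: all the combinatorial heavy lifting is absorbed into the associativity statement \Cref{prop_associativity}, and the rest is a bookkeeping argument where the inductive hypothesis is available precisely because each $\multi{y}'_{(i)}$ still has cardinality $m-1$. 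The only point worth double-checking is that the inductive hypothesis is applied consistently to all $m$ summands that arise from expanding $\multi{x}\times(\multi{y}'\times \multiset{z})$ linearly, which follows from $\Z$-linearity of both $\pse{f}{}$ and the product $\times$.
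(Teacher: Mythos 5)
Your proposal is correct and takes essentially the same route as the paper's own proof: the same induction on $\car{\multi{y}}$ with \Cref{prop_product_formula_first_case} as the base case, the same application of \Cref{prop_associativity} to $(\multi{x}\times\multi{y}')\times\multiset{z}$, the same expansion of $\multi{y}'\times\multiset{z}$ and application of the inductive hypothesis to each cardinality-$(m-1)$ summand, and the same final appeal to the recursion \eqref{eq_recursive}. Your only additions (the trivial $m=0$ base case and the explicit remarks on symmetry of $\pse{f}{m}$ and $\Z$-linearity) are harmless.
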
 
    \begin{proof}
    We fix $n$ and we fix $\multi{x}=\multiset{x_1,\dots,x_n}\in R^n/S_n$.
    The proof is by induction on the cardinality of $\multi{y}$.
    The case $\car{\multi{y}}=1$ was treated in \Cref{prop_product_formula_first_case}.

    Assume that \Cref{eq_1} holds whenever $\car{\multi{y}}<m$.
    Let $\multi{y}=\multiset{y_1,\dots,y_m}$, $\multi{y}'=\multiset{y_1,\dots,y_{m-1}}$, and $\multi{z}=\multiset{y_m}$.
    By \Cref{prop_associativity},
    \[(\multi{x}\times \multi{y}')\times \multi{z}=\multi{x}\times(\multi{y}'\times \multi{z})\]
    By the induction hypothesis, and since $\car{\multi{z}},\car{\multi{y}'}<m$, 
    \[\pse{f}{}\sog{(\multi{x}\times \multi{y}')\times \multi{z}}=\pse{f}{}(\multi{x}\times \multi{y}')\cdot \pse{f}{1}(\multi{z})=\pse{f}{n}(\multi{x})\pse{f}{m-1}(\multi{y}')f(y_m).\]
    Note that  
    \[\multi{y}'\times \multi{z}=
    \multi{y}+ [y_1y_m,y_2,\dots,y_{m-1}]+[y_1,y_2y_m,\dots,y_{m-1}]+\dots+[y_1,y_2,\dots,y_{m-1}y_m].\]
    Then
    \[\multi{x}\times (\multi{y}'\times \multi{z})=\multi{x}\times \multi{y}+\multi{x}\times[y_1y_m,y_2,\dots,y_{m-1}]+\dots+\multi{x}\times [y_1,y_2,\dots,y_{m-1}y_m],\]
    and therefore,
        \begin{align*}
        \pse{f}{}\sog{\multi{x}\times(\multi{y}'\times\multi{z})}
        &=\pse{f}{}(\multi{x}\times \multi{y})+\sum_{i=1}^{m-1}\pse{f}{}(\multi{x}\times [y_1,\dots,y_{i-1},y_iy_m,y_{i+1},\dots,y_{m-1}])\\
        &=\pse{f}{}(\multi{x}\times \multi{y})+\sum_{i=1}^{m-1}\pse{f}{}(\multi{x})\cdot\pse{f}{}([y_1,\dots,y_{i-1},y_iy_m,y_{i+1},\dots,y_{m-1}]).
        \end{align*} 
    The last equality follows from the induction hypothesis.
    Finally, 
        \begin{align*}
        \pse{f}{}(\multi{x}\times \multi{y})
        &=\pse{f}{}(\multi{x}\times(\multi{y}'\times \multi{z}))-\sum_{i=1}^{m-1}\pse{f}{}(\multi{x})\pse{f}{m-1}([y_1,\dots,y_{i-1},y_iy_m,y_{i+1},\dots,y_{m-1}])\\
        &=\pse{f}{}((\multi{x}\times \multi{y}')\times \multi{z})-\pse{f}{}(\multi{x})\sum_{i=1}^{m-1}\pse{f}{m-1}([y_1,\dots,y_{i-1},y_iy_m,y_{i+1},\dots,y_{m-1}])\\
        &=\pse{f}{}(\multi{x})\pse{f}{}(\multi{y}')f(y_m)-\pse{f}{}(\multi{x})\sum_{i=1}^{m-1}\pse{f}{m-1}([y_1,\dots,y_{i-1},y_iy_m,y_{i+1},\dots,y_{m-1}])\\
        &=\pse{f}{n}(\multi{x})\sog{\pse{f}{m-1}(y_1,\dots,y_{m-1})f(y_m)-\sum_{i=1}^{m-1}\pse{f}{m-1}(y_1,\dots,y_{i-1},y_iy_m,y_{i+1},\dots,y_{m-1})}\\
        &=\pse{f}{n}(\multi{x})\pse{f}{m}(\multi{y}).
        \end{align*} 
    The last equality follows from \Cref{eq_recursive}.
    \end{proof} 

\begin{cor}
The map $\pse{f}{}:\M(R)\map A$ is a ring homomorphism.
\end{cor}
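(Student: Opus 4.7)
The plan is to verify the three conditions defining a ring homomorphism: $\mathbb{Z}$-linearity (additivity), preservation of the multiplicative unit, and multiplicativity. The first two are essentially built into the setup. Additivity of $\pse{f}{}$ holds by construction, since the map was defined by extending $\mathbb{Z}$-linearly from its values on $H$-multisets. As for the unit, the multiplicative identity of $\M(R)$ is the empty $H$-multiset $\emptyset_H$ by \Cref{exmp_empty_is_unit}, and this is sent to $1\in A$ by the convention $\pse{f}{}(\emptyset_H)=1$.

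The heart of the statement is the multiplicative identity $\pse{f}{}(S\times T)=\pse{f}{}(S)\cdot\pse{f}{}(T)$ for all $S,T\in \M(R)$. For basis elements $S=\multi{x}\in R^n/S_n$ and $T=\multi{y}\in R^m/S_m$, this is precisely \Cref{thm_main_identity}. To extend to arbitrary $S=\sum_i a_i\multi{x}_i$ and $T=\sum_j b_j\multi{y}_j$, I would invoke the $\mathbb{Z}$-bilinearity of $\times$, the $\mathbb{Z}$-linearity of $\pse{f}{}$, and the distributivity of multiplication in $A$: both sides then expand to $\sum_{i,j}a_ib_j\,\pse{f}{\car{\multi{x}_i}}(\multi{x}_i)\cdot\pse{f}{\car{\multi{y}_j}}(\multi{y}_j)$, where the agreement on each summand is exactly the basis case just cited.

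There is no real obstacle remaining at this stage; the corollary is a purely formal consequence of \Cref{thm_main_identity}, obtained by bilinear extension. All the substantive work has already been carried out in the proof of the product formula itself, which was the main inductive input and which relied on the associativity of $\times$ established in \Cref{prop_associativity}.
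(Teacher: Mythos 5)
Your proof is correct and matches the paper's own argument: the paper likewise notes that additivity and the unit are built into the construction, and reduces multiplicativity to \Cref{thm_main_identity} via the distributivity of $\times$ and the $\mathbb{Z}$-linearity of the extended map. Your write-up simply spells out the bilinear expansion in slightly more detail than the paper does.
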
 
    \begin{proof}
    The only thing left to be shown is that $\pse{f}{}(S_1\times S_2)=\pse{f}{}(S_1)\cdot\pse{f}{}(S_2)$, for all $S_1,S_2\in \M(H)$.
    This follows from \Cref{thm_main_identity}, together with the distributivity of the multiplication $\times$, and the linearity of $\pse{f}{}$.
    \end{proof}

\section{A product formula for pseudocharacters and the proof of \Cref{thm_main}}

    \begin{proof}[Proof of \Cref{prop_n_n_identity}]
    Let $\multi{x}=\multiset{x_1,\dots,x_d}$ and $\multi{y}=\multiset{y_1,\dots,y_d}$.
    Let $(I,J,\alpha):[d]\map [d]$ be a partial bijection.
    If $\alpha$ is not a bijection, then $\pse{f}{k}(\multi{x}\atimes{\alpha}\multi{y})=0$, where $k=\car{\multi{x}\atimes{\alpha}\multi{y}}$.
    Indeed, we have $\car{I}<d$, so
    \[k=\car{\multi{x}\atimes{\alpha}\multi{y}}=\car{\multi{x}}+\car{\multi{y}}-\car{I}>d.\]
    Since $f$ is pseudocharacter of degree $d$, and $k>d$, we have $\pse{f}{k}(\multi{x}\atimes{\alpha}\multi{y})=0$.
    
    Now, if $(I,J,\alpha):[d]\map [d]$ is a bijection, then by definition,
    \[\multi{x}\atimes{\alpha} \multi{y}=\multiset{x_1y_{\alpha(1)},\dots,x_dy_{\alpha(d)}}.\]
    
    The proof now follows from \Cref{thm_main_identity} which asserts that $\pse{f}{d}(\multi{x})\pse{f}{d}(\multi{y})=\pse{f}{}(\multi{x}\times \multi{y})$.
    \end{proof}

\begin{cor}\label{cor_multiplicative}
Let $f:R\map A$ be a pseudocharacter of degree $d$.
Then the map $D_f$ defined in \Cref{eq_det} is multiplicative.
\end{cor}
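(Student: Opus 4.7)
The plan is to extract multiplicativity of $D_f$ from \Cref{prop_n_n_identity} by specializing its arguments. Setting $x_1 = \cdots = x_d = x$ and $y_1 = \cdots = y_d = y$, every term $\pse{f}{d}(xy_{\sigma(1)},\dots,xy_{\sigma(d)})$ on the right-hand side equals $\pse{f}{d}(xy,\dots,xy)$, so the sum over $\sigma \in S_d$ collapses to $d!\cdot\pse{f}{d}(xy,\dots,xy)$. The left-hand side is $\pse{f}{d}(x,\dots,x)\pse{f}{d}(y,\dots,y) = (d!)^2 D_f(x) D_f(y)$. Since $d!$ is invertible in $A$, dividing by $(d!)^2$ yields $D_f(x) D_f(y) = D_f(xy)$ for all $x,y \in R$.

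To conclude that $D_f$ is multiplicative as a polynomial law, the analogous identity must hold on every base change $R \otimes_A B$. For any commutative $A$-algebra $B$, the $B$-linear extension $f_B : R \otimes_A B \to B$ inherits centrality, satisfies $f_B(1)=d$, retains invertibility of $d!$ in $B$, and has $\pse{f_B}{d+1}=0$ by base change of the multilinear form $\pse{f}{d+1}$; hence $f_B$ is a $d$-dimensional pseudocharacter over $B$. The recursive construction of $\pse{f}{n}$ is manifestly functorial in the pair $(A,f)$, so $(D_f)_B$ coincides with $D_{f_B}$, and the argument above applied to $f_B$ gives multiplicativity on $R \otimes_A B$.

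Finally, the unit relation $D_f(1)=1$ is a short calculation: the recursion \eqref{eq_recursive} with all arguments equal to $1$ gives $\pse{f}{n}(1,\dots,1) = (d-n+1)\pse{f}{n-1}(1,\dots,1)$, so by induction $\pse{f}{d}(1,\dots,1) = d!$. There is no substantial obstacle beyond what is already encoded in \Cref{prop_n_n_identity}; the corollary follows by the specialization-and-count step above, and the heavy combinatorial lifting happens upstream in the proof of the product formula itself.
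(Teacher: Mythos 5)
Your proof is correct and takes essentially the same route as the paper: specialize \Cref{prop_n_n_identity} to $x_1=\dots=x_d=x$ and $y_1=\dots=y_d=y$, collapse the sum over $S_d$ to $d!\,\pse{f}{d}(xy,\dots,xy)$, and divide by $(d!)^2$, which is legitimate since $d!$ is invertible. The extra verifications you include (base change of the pseudocharacter and the unit value $D_f(1)=1$) are sound but are not part of the paper's proof of this corollary; the paper checks multiplicativity only on $R$ itself here, and handles the polynomial-law/base-change point in the proof of \Cref{thm_main} (citing that $T^B$ is again a pseudocharacter) and the unit-type computation via \Cref{lem_simple_comp}.
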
 
    \begin{proof}
    Let $x,y\in R$, and set $\multi{x}=\multiset{x,\dots,x}\in R^d/S_d$ and $\multi{y}=\multiset{y,\dots,y}\in R^d/S_d$.
    By \Cref{prop_n_n_identity},
        \begin{align*}
        D_f(x)D_f(y)
        &=\frac{1}{d!}\pse{f}{d}(x,\dots,x)\frac{1}{d!}\pse{f}{d}(y,\dots,y)
        =\frac{1}{d!^2}\sum_{\sigma\in S_d}\pse{f}{d}(xy,\dots,xy)\\
        &=\frac{1}{d!^2}d!\pse{f}{d}(xy,\dots,xy)
        =\frac{1}{d!}\pse{f}{d}(xy,\dots,xy)
        =D_f(xy).
        \end{align*} 
    \end{proof} 

    \begin{lem}\label{lem_simple_comp}
    Let $f:R\map A$ be a central function.
    Then
    \[\pse{f}{n}(x,1,\dots, 1)=f(x)\prod_{i=1}^{n-1}(f(1)-i).\]
    \end{lem}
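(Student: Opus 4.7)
The plan is to prove the identity by induction on $n$, applying the recursive definition \Cref{eq_recursive} with $x_n = 1$.

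For the base case $n=1$, the formula reduces to $\pse{f}{1}(x) = f(x)$, which matches the empty product on the right-hand side. For the inductive step, I would apply \Cref{eq_recursive} with $x_1 = x$ and $x_2 = \dots = x_n = 1$, taking $x_n = 1$ as the ``singled-out'' argument. This gives
\[
\pse{f}{n}(x,1,\dots,1) = f(1)\pse{f}{n-1}(x,1,\dots,1) - \sum_{i=1}^{n-1}\pse{f}{n-1}(x_1,\dots,x_{i-1}, x_i\cdot 1, x_{i+1},\dots,x_{n-1}).
\]
The key observation is that for each $i$ in the sum, the replacement $x_i \mapsto x_i \cdot 1$ does nothing, so every summand is just $\pse{f}{n-1}(x,1,\dots,1)$. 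There are $n-1$ such terms, so the right-hand side collapses to $\bigl(f(1) - (n-1)\bigr)\pse{f}{n-1}(x,1,\dots,1)$.

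Applying the induction hypothesis to $\pse{f}{n-1}(x,1,\dots,1) = f(x)\prod_{i=1}^{n-2}(f(1) - i)$ and multiplying by the factor $f(1) - (n-1)$ completes the product up through $i = n-1$, giving the claimed formula. There is no real obstacle here: the argument is a direct unwinding of the recursion, and the only thing to notice is that the substitution $x_i \mapsto x_i \cdot 1$ is trivial, which is exactly what makes all $n-1$ summands identical and collapses the recursion into a clean one-term factor.
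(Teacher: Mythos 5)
Your proof is correct and is essentially identical to the paper's own argument: both proceed by induction on $n$, apply \Cref{eq_recursive} with the last argument equal to $1$ so that all $n-1$ summands collapse to $\pse{f}{n-1}(x,1,\dots,1)$, yielding the factor $f(1)-(n-1)$, and then invoke the induction hypothesis. No differences worth noting.
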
 
        \begin{proof}
        By induction on $n$.
        When $n=1$, both sides are equal to $f(x)$.
        Let $n>1$ and assume that the claim holds for $n-1$.
        By \Cref{eq_recursive},
            \begin{align*}
            \pse{f}{n}(x,1,\dots,1)
            &=f(1)\pse{f}{n-1}(x,1,\dots,1)-(n-1)\pse{f}{n-1}(x,1,\dots,1)\\
            &=(f(1)-(n-1))\pse{f}{n-1}(x,1,\dots,1)\\
            &=(f(1)-(n-1))f(x)\prod_{i=1}^{n-2}(f(1)-i)
            =f(x)\prod_{i=1}^{n-1}(f(1)-i).
            \end{align*} 
        \end{proof} 

    \begin{proof}[Proof of \Cref{thm_main}]
    Let $D$ be an $A$-valued $d$-dimensional determinant of $R$.
    Let $T:=\mathrm{Tr}_D$ be the trace function associated to $D$.
    By definition, $T(x)$ is the negative of the coefficient of $t^{d-1}$ in $D(t-x)$, i.e.
    \[D(t-x)=t^d-T(x)t^{d-1}+\dots.\]
    By $D_T$ we denote the polynomial law that assigns to any unital commutative $A$-algebra $B$, and to any $y\in B\otimes_A R$ the value
    \[D_T(y):=\frac{1}{d!}\pse{T^B}{d}(y,\dots,y).\]
    Here, $T^B$ stands for the function $T\otimes Id_B:R\otimes_A B\map B$.
    $D_T$ is clearly an $A$-valued, homogeneous of degree $d$, polynomial law.
    If $T$ is a $d$-dimensional pseudochracter, so is $T^B$ (\cite{Bellaiche_Chenevier_book}*{Section 1.2}), so by \Cref{cor_multiplicative}, $D_T$ is multiplicative.
    Therefore, $D_T$ is $d$-dimensional determinant over $A$.
    We claim that $D=D_T$.
    Since the map that takes a determinant to its trace is injective (\cite{chenevier2014p}*{Proposition 1.27}), it is enough to show that the trace associated to $D_T$ is $T$.
    Let $x\in R$.
    Since $\pse{T}{d}$ is a symmetric multilinear form, the coefficient of $t^{d-1}$ in the expression $D_T(t-x)=\frac{1}{d!}\pse{T}{d}(t-x,\dots,t-x)$ is
    \[\frac{1}{d!}\cdot d\cdot \pse{T}{d}(t,\dots,t,-x)
        =t^{d-1}\frac{1}{(d-1)!}\cdot \pse{T}{d}(-x,1,\dots,1).\]
    By \Cref{lem_simple_comp}, together with $T(1)=d$, we have that $\pse{T}{d}(-x,1,\dots,1)=(d-1)!T(-x)$.
    Then
    \[\frac{1}{d!}\cdot d\cdot \pse{T}{d}(t,\dots,t,-x)
    =t^{d-1}\frac{1}{(d-1)!}(d-1)!T(-x)=-t^{d-1}T(x).\]
    Therefore, the trace associated to $D_T$ is $T$.
    \end{proof}



\bibliographystyle{alpha}
\bibliography{main}
\end{document}